\makeatletter \@namedef{subjclassname@2010}{
  \textup{2020} Mathematics Subject Classification}
\newtheorem{thm}{Theorem}[section]
\theoremstyle{remark}
\newtheorem*{rema}{Remark}
\theoremstyle{definition}
\newtheorem{exa}[thm]{\textbf{Example}}
\newcommand{\R}{\mathbb{R}}
\begin{document}

\title[Fuglede-Putnam theorem]{Yet another generalization of the Fuglede-Putnam theorem to unbounded operators}
\author[M. H. MORTAD]{Mohammed Hichem Mortad}

\thanks{}
\date{}
\keywords{Normal operator. Closed operator. Fuglede-Putnam theorem.
Hilbert space}

\subjclass[2010]{Primary 47B15. Secondary 47A08.}

\address{Department of
Mathematics, University of Oran 1, Ahmed Ben Bella, B.P. 1524, El
Menouar, Oran 31000, Algeria.\newline {\bf Mailing address}:
\newline Pr Mohammed Hichem Mortad \newline BP 7085 Seddikia Oran
\newline 31013 \newline Algeria}

\email{mhmortad@gmail.com, mortad.hichem@univ-oran1.dz.}

\begin{abstract}In this note, we give the most natural (perhaps the simplest ever)
generalization of the Fuglede-Putnam theorem where all operators
involved are unbounded.
\end{abstract}

\maketitle

\section{Introduction}

The original version of the Fuglede-Putnam theorem reads:

\begin{thm}\label{Fug-Put UNBD A BD} If $A\in B(H)$ and if $M$
and $N$ are normal (non necessarily bounded) operators, then
\[AN\subset MA\Longrightarrow AN^*\subset M^*A.\]
\end{thm}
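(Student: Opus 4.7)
The strategy I would adopt is to reduce the unbounded setting to the classical (bounded) Fuglede--Putnam theorem by passing to the bounded normal---indeed unitary---operators $e^{itM}$ and $e^{itN}$ furnished by the spectral theorem for (unbounded) normal operators. The key intermediate statement to aim for is the unitary intertwining
\[
Ae^{itN}=e^{itM}A\qquad(t\in\R).
\]

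To establish this, I would fix $x\in\mathrm{dom}(N)$ and differentiate
\[
g(t):=e^{-itM}Ae^{itN}x.
\]
Because $e^{itN}$ preserves $\mathrm{dom}(N)$ and commutes with $N$, the hypothesis $AN\subset MA$ yields $Ae^{itN}x\in\mathrm{dom}(M)$, so Stone's theorem licenses the differentiation. A direct computation, using $AN\subset MA$ on $\mathrm{dom}(N)$ and the commutation of $e^{-itM}$ with $M$ on $\mathrm{dom}(M)$, gives $g'(t)=0$. Combined with $g(0)=Ax$, this forces $Ae^{itN}x=e^{itM}Ax$ for every $x\in\mathrm{dom}(N)$; density of $\mathrm{dom}(N)$ together with boundedness of both sides extends the equality to all of $H$.

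Next, I would apply the classical bounded Fuglede--Putnam theorem to the bounded normal operators $e^{itN}$ and $e^{itM}$, obtaining
\[
Ae^{-itN^*}=e^{-itM^*}A\qquad(t\in\R).
\]
Differentiating at $t=0$ completes the proof: for $x\in\mathrm{dom}(N^*)=\mathrm{dom}(N)$, the left-hand side has derivative $-iAN^*x$ by the continuity of $A$ and Stone's theorem applied to $\{e^{-itN^*}\}$; existence of this limit then forces, by the converse direction of Stone's theorem, $Ax\in\mathrm{dom}(M^*)$ with derivative $-iM^*Ax$. This yields $AN^*x=M^*Ax$ and hence $AN^*\subset M^*A$.

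The principal obstacle I anticipate is the careful tracking of domains in the step $g'(t)=0$, specifically verifying that $Ae^{itN}x\in\mathrm{dom}(M)$ for each $t\in\R$ so that the Leibniz-type calculation is justified; once this is cleanly handled, the remainder of the argument is a mechanical combination of the bounded Fuglede--Putnam theorem with Stone's theorem.
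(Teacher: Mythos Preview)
The paper does not supply its own proof of this theorem; it is quoted as the classical Fuglede--Putnam result and attributed to Fuglede, Putnam, and Rosenblum. So there is no ``paper's proof'' to compare against, and your proposal must stand on its own.

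Unfortunately it does not. The central claim --- that $e^{itN}$ and $e^{itM}$ are bounded (indeed unitary) for a general unbounded \emph{normal} operator $N$ --- is false. Stone's theorem produces a unitary group $e^{itS}$ only when $S$ is \emph{self-adjoint}. For a normal $N$ with spectral measure $E$, the functional calculus gives $e^{itN}=\int_{\sigma(N)} e^{it\lambda}\,dE(\lambda)$, and since $|e^{it\lambda}|=e^{-t\,\mathrm{Im}\,\lambda}$, the operator $e^{itN}$ is unbounded whenever $\sigma(N)$ is unbounded in the imaginary direction (which is the generic situation for an unbounded normal operator that is not self-adjoint). Consequently the objects $e^{itN}$, $e^{itM}$ on which your whole argument rests need not be bounded, the ``unitary intertwining'' $Ae^{itN}=e^{itM}A$ is not available, the appeal to the bounded Fuglede--Putnam theorem collapses, and Stone's theorem cannot be invoked for the differentiation steps.

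A repair along your lines is possible but requires real work: decompose $N=N_1+iN_2$ and $M=M_1+iM_2$ into their (strongly commuting) self-adjoint real and imaginary parts via the spectral theorem, run a two-parameter Stone-type argument with the genuine unitary groups $e^{isN_1}$, $e^{itN_2}$ (and similarly for $M$), and then reassemble. Alternatively one may pass to bounded normal operators through the Cayley transform or the resolvents $(N-\lambda)^{-1}$, which is closer in spirit to the standard treatments of the unbounded case. Either route is substantially more delicate than what you have sketched.
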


Fuglede \cite{FUG} established Theorem \ref{Fug-Put UNBD A BD} in
the case $N=M$. Then Putnam \cite{PUT} proved the theorem as it
stands. Many would agree that the most elegant proof is the one due
to Rosenblum in \cite{ROS.FP}.

There have been many generalizations of the Fuglede-Putnam theorem
since Fuglede's paper. However, most generalizations were devoted to
relaxing the normality assumption. The generalizations to closed
unbounded $A$ in the theorem above are few. The first result in this
sense is:

\begin{thm}\label{Fug-Put-MORTAD-PAMS-2003} If $A$ is a closed operator and if $N$ is an unbounded normal operator, then
\[AN\subset N^*A\Longrightarrow AN^*\subset NA\]
whenever $D(N)\subset D(A)$.
\end{thm}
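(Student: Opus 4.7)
The plan is to reduce to the bounded-$A$ case via the bounded transform of $N$, and then apply Theorem \ref{Fug-Put UNBD A BD}. First, set $B := (I + N^*N)^{-1}$; by the spectral theorem $B$ is a positive self-adjoint contraction commuting with both $N$ and $N^*$, with $\operatorname{ran}(B) = D(N^*N) \subset D(N) \subset D(A)$. Hence $T := AB$ is everywhere defined on $H$, and since $A$ is closed and $B$ is bounded with range in $D(A)$, the operator $T$ is closed and therefore belongs to $B(H)$ by the closed graph theorem. For $x \in D(N)$, the vector $Bx$ lies in $D(N^*N) \subset D(N)$ with $NBx = BNx \in D(N) \subset D(A)$, so $Bx \in D(AN)$, and the hypothesis $AN \subset N^*A$ yields
\[
TNx = A(NBx) = N^*A(Bx) = N^*Tx,
\]
so $TN \subset N^*T$. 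Applying Theorem \ref{Fug-Put UNBD A BD} to $T \in B(H)$ with the normal operators $N$ and $N^*$ gives $TN^* \subset NT$; using the commutation $BN^* = N^*B$ on $D(N^*)$, this reads
\[
A(N^*By) = NA(By) \qquad \text{for every } y \in D(N^*).
\]

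Next, I would repeat the construction with $B_\epsilon := (I + \epsilon N^*N)^{-1}$ in place of $B$ to obtain $A(N^*B_\epsilon y) = NA(B_\epsilon y)$ for every $\epsilon > 0$ and $y \in D(N^*)$. The closed graph theorem, applied to the closed and everywhere-defined inclusion $(D(N), \|\cdot\|_N) \hookrightarrow (D(A), \|\cdot\|_A)$, shows that $A|_{D(N)} \colon (D(N), \|\cdot\|_N) \to H$ is bounded. For $w \in D(AN^*)$---so that $w \in D(N^*) = D(N) \subset D(A)$ and $N^*w \in D(A)$---one has $B_\epsilon w \to w$ in the graph norm of $N$ as $\epsilon \to 0$, hence $A(B_\epsilon w) \to Aw$ in $H$. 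If I can show that $A(B_\epsilon N^*w)$ converges, then the identity $NA(B_\epsilon w) = A(B_\epsilon N^*w)$ together with the closedness of $N$ gives $Aw \in D(N)$ and $NAw = \lim A(B_\epsilon N^*w)$; since $B_\epsilon N^*w \to N^*w$ with $N^*w \in D(A)$, the closedness of $A$ identifies the limit as $AN^*w$, yielding $AN^* \subset NA$.

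The main obstacle will be precisely the convergence of $A(B_\epsilon N^*w)$ as $\epsilon \to 0$. This does not follow from a naive strong-convergence argument, because the bounded operators $T_\epsilon := AB_\epsilon$ are not uniformly bounded (indeed $\|NB_\epsilon\| = O(\epsilon^{-1/2})$), and $N^*w$ need not lie in $D(N)$, so the estimate $\|A(B_\epsilon - B_{\epsilon'})u\| \leq C\|(B_\epsilon - B_{\epsilon'})u\|_N$ does not immediately give a Cauchy sequence. The substance of the proof lies in leveraging the derived identity, the closedness of both $A$ and $N$, and (if needed) an auxiliary spectral cutoff of $N$ to establish this convergence without imposing further regularity on $N^*w$.
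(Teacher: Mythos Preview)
The paper does not give its own proof of this theorem: Theorem~\ref{Fug-Put-MORTAD-PAMS-2003} is quoted from \cite{MHM1} (with the remark that self-adjointness of $A$ there can be relaxed to closedness) and from \cite{Paliogiannis-NEW-proofs-MORTAD}; in the present paper it serves only as input to the proofs of Theorems~\ref{Fug-Put-MORTAD-PAMS-2003-2020} and \ref{Main Theorem A closed  N M unbounded-2020}. So there is no in-paper argument to compare your proposal against.

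On the proposal itself: the first half is sound. With $B=(I+N^*N)^{-1}$ (or $B_\epsilon$), the operator $T=AB$ is everywhere defined, closed, hence bounded; the verification $TN\subset N^*T$ and the appeal to Theorem~\ref{Fug-Put UNBD A BD} to obtain $TN^*\subset NT$ are correct. The difficulty you flag in the second half, however, is a genuine gap and is not resolved by what you write. For $w\in D(AN^*)$ one has $N^*w\in D(A)$ but in general $N^*w\notin D(N)$, so the estimate $\|Au\|\le C\|u\|_N$ (valid for $u\in D(N)$) gives no control on $\|A(B_\epsilon-B_{\epsilon'})N^*w\|$. Rewriting via the identity $A B_\epsilon N^*w = N A B_\epsilon w$ only shifts the problem to showing that $A B_\epsilon w$ is Cauchy in the graph norm of $N$, which is equally unavailable. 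A weak-compactness route would need a uniform bound on $\|A B_\epsilon N^*w\|$, and nothing in your hypotheses yields one; the norms $\|A B_\epsilon\|$ may blow up as $\epsilon\to 0$. Replacing $B_\epsilon$ by spectral cut-offs $E_n=E_{|N|}([0,n])$ runs into the same obstruction. In short, the ``substance of the proof'' you defer to the last paragraph is exactly the missing idea; the approximation-then-limit scheme, as formulated, does not close without an additional argument that you have not supplied.
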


In fact, the previous result was established in \cite{MHM1} under
the assumption of the self-adjointness of $A$ (see
\cite{Paliogiannis-NEW-proofs-MORTAD} for a different proof).
However, by scrutinizing its proof, it is seen that only the
closedness of $A$ was needed (the self-adjointness was added to be
used in some subsequent results). Later in
\cite{Mortad-Fuglede-Putnham-All-unbd} the following generalization
was obtained:

\begin{thm}\label{Main Theorem A closed  N M unbounded}
Let $A$ be a closed operator with domain $D(A)$. Let $M$ and $N$ be
two unbounded normal operators with domains $D(N)$ and $D(M)$
respectively. If $D(N)\subset D(AN)\subset D(A)$, then
\[AN\subset MA\Longrightarrow AN^*\subset M^*A.\]
\end{thm}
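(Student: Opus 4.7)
The plan is to bootstrap the result from Theorem \ref{Fug-Put-MORTAD-PAMS-2003} by a Berberian-style $2\times 2$ matrix construction on $H\oplus H$. I would introduce the normal operator
\[
T:=\begin{pmatrix} N & 0\\ 0 & M^*\end{pmatrix}
\]
with domain $D(T)=D(N)\oplus D(M^*)=D(N)\oplus D(M)$, using $D(M)=D(M^*)$ which holds since $M$ is normal, together with the closed operator
\[
B:=\begin{pmatrix} 0 & 0\\ A & 0\end{pmatrix}
\]
defined on $D(B)=D(A)\oplus H$. Closedness of $B$ follows at once from the closedness of $A$, while normality of $T$ follows from the normality of $N$ and of $M^*$.

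The essential design choice is to place $M^*$ (rather than $M$) in the lower-right slot of $T$: this pairs with the ``involution swap'' in the conclusion of Theorem \ref{Fug-Put-MORTAD-PAMS-2003} so that the hypotheses line up. A direct matrix computation gives $BT(x,y)=(0,ANx)$ on $D(AN)\oplus D(M)$ and $T^{*}B(x,y)=(0,MAx)$ on $D(MA)\oplus H$, so that the assumption $AN\subset MA$ is literally $BT\subset T^{*}B$. The remaining hypothesis $D(T)\subset D(B)$ reduces to $D(N)\subset D(A)$, which is delivered by the chain $D(N)\subset D(AN)\subset D(A)$.

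At this point Theorem \ref{Fug-Put-MORTAD-PAMS-2003}, applied to the closed operator $B$ and the normal operator $T$, yields $BT^{*}\subset TB$. A final matrix unpacking gives $BT^{*}(x,y)=(0,AN^{*}x)$ on $D(AN^{*})\oplus D(M)$ and $TB(x,y)=(0,M^{*}Ax)$ on $D(M^{*}A)\oplus H$, and this inclusion is precisely $AN^{*}\subset M^{*}A$, the desired conclusion. I expect the only real obstacle to be the correct choice of the auxiliary operator $T$: the naïve guess $T=N\oplus M$ would translate the hypothesis into $BT\subset TB$, which is not the form Theorem \ref{Fug-Put-MORTAD-PAMS-2003} consumes; replacing $M$ by $M^{*}$ in that slot---a move justified by the normality of $M$---is exactly what makes the previously known theorem applicable.
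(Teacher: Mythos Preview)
Your argument is correct. Note first that the paper does not itself prove Theorem \ref{Main Theorem A closed  N M unbounded}; it is quoted from \cite{Mortad-Fuglede-Putnham-All-unbd}. What the paper does prove is the sharper Theorem \ref{Main Theorem A closed  N M unbounded-2020}, requiring only $D(N)\subset D(T)$, and it does so by a \emph{two-step} Berberian trick: first, with $\widetilde N=N\oplus N^{*}$, Theorem \ref{Fug-Put-MORTAD-PAMS-2003} is upgraded to the ``same normal on both sides'' case $TN\subset NT\Rightarrow TN^{*}\subset N^{*}T$ (Theorem \ref{Fug-Put-MORTAD-PAMS-2003-2020}); second, with $\widetilde N=N\oplus M$, this new single-normal result is applied to obtain the two-normal conclusion.

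Your route is genuinely different and slightly more efficient: by putting $M^{*}$ rather than $M$ in the lower-right block you land directly in the hypothesis pattern $BT\subset T^{*}B$ of Theorem \ref{Fug-Put-MORTAD-PAMS-2003} and reach the conclusion in a single application, bypassing the intermediate Theorem \ref{Fug-Put-MORTAD-PAMS-2003-2020}. Moreover, since the only domain condition you actually invoke is $D(N)\subset D(A)$ (the chain $D(N)\subset D(AN)\subset D(A)$ is used solely to extract this inclusion), your argument in fact establishes the stronger Theorem \ref{Main Theorem A closed  N M unbounded-2020} as well. The paper's two-step organization, on the other hand, isolates Theorem \ref{Fug-Put-MORTAD-PAMS-2003-2020} as a result of independent interest.
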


A similar result was obtained in
\cite{paliogiannis-Fug-Putnam-ALL-UNBD}, under the assumptions
$D(N)\subset D(A)$ and $D(M)\subset D(A^*)$.

The purpose of this note is to give the simplest possible (and most
probably minimal in terms of hypotheses) generalization of this
powerful tool in Operator Theory. Apparently, this is the best
possible generalization as Example
\ref{hghgdfsdzretrutioyoyoypypypy} tells us.

In the end, readers of this paper should have knowledge of linear
unbounded operators, as well as matrices of unbounded operators.
Some useful references are \cite{SCHMUDG-book-2012} and
\cite{tretetr-book-BLOCK} respectively.

\section{A counterexample}

In \cite{Mortad-Fuglede-Putnham-All-unbd}, we provided an explicit
pair of a boundedly invertible and positive self-adjoint unbounded
operator $A$ and a normal unbounded operator $N$ (both defined in
$L^2(\R)$) such that
\[AN^*=NA\text{ but }AN\not\subset N^*A \text{ and }N^*A\not\subset AN\]
(in fact $ANf\neq N^*Af$ for all $f\neq 0$). In other words, $NA$ is
self-adjoint whilst $N^*A$ is not.

Recall that $A$ and $N$ were then defined by
\[Af(x)=(1+|x|)f(x)\text{ and } Nf(x)=-i(1+|x|)f'(x)\]
(with $i^2=-1$) respectively on the domains
\[D(A)=\{f\in L^2(\R): (1+|x|)f\in L^2(\R)\}\]
and
\[D(N)=\{f\in L^2(\R): (1+|x|)f'\in L^2(\R)\}\]
(where the derivative is a distributional one).

This example can further be beefed up in the following sense:

\begin{exa}\label{hghgdfsdzretrutioyoyoypypypy} There is a closed $T$ and a normal $M$ such that $TM\subset MT$ but $TM^*\not\subset M^*T$ and $M^*T\not\subset
TM^*$. Consider
\[M=\left(
      \begin{array}{cc}
        N^* & 0 \\
        0 & N \\
      \end{array}
    \right)\text{ and } T=\left(
                            \begin{array}{cc}
                              0 & 0 \\
                              A & 0 \\
                            \end{array}
                          \right)
\]
where $N$ is normal with domain $D(N)$ and $A$ is closed with domain
$D(A)$ and such that $AN^*=NA$\text{ but }$AN\not\subset N^*A$
\text{ and }$N^*A\not\subset AN$ (as defined above). Clearly, $M$ is
normal and $T$ is closed. Observe that $D(M)=D(N^*)\oplus D(N)$ and
$D(T)=D(A)\oplus L^2(\R)$. Now,
\[TM=\left(
                            \begin{array}{cc}
                              0 & 0 \\
                              A & 0 \\
                            \end{array}
                          \right)\left(
      \begin{array}{cc}
        N^* & 0 \\
        0 & N \\
      \end{array}
    \right)=\left(
              \begin{array}{cc}
                0_{D(N^*)} & 0_{D(N)} \\
                AN^* & 0 \\
              \end{array}
            \right)=\left(
              \begin{array}{cc}
                0 & 0_{D(N)} \\
                AN^* & 0 \\
              \end{array}
            \right)
    \]
where e.g. $0_{D(N)}$ is the zero operator restricted to $D(N)$.
Likewise
\[MT=\left(
      \begin{array}{cc}
        N^* & 0 \\
        0 & N \\
      \end{array}
    \right)\left(
                            \begin{array}{cc}
                              0 & 0 \\
                              A & 0 \\
                            \end{array}
                          \right)=\left(
                            \begin{array}{cc}
                              0 & 0 \\
                              NA & 0 \\
                            \end{array}
                          \right).\]
Since $D(TM)=D(AN^*)\oplus D(N)\subset D(NA)\oplus L^2(\R)=D(MT)$,
it ensues that $TM\subset MT$. Now, it is seen that
\[TM^*=\left(
                            \begin{array}{cc}
                              0 & 0 \\
                              A & 0 \\
                            \end{array}
                          \right)\left(
      \begin{array}{cc}
        N & 0 \\
        0 & N^* \\
      \end{array}
    \right)=\left(
              \begin{array}{cc}
                0 & 0_{D(N^*)} \\
                AN & 0 \\
              \end{array}
            \right)\]
and
\[M^*T=\left(
      \begin{array}{cc}
        N & 0 \\
        0 & N^* \\
      \end{array}
    \right)\left(
                            \begin{array}{cc}
                              0 & 0 \\
                              A & 0 \\
                            \end{array}
                          \right)=\left(
                            \begin{array}{cc}
                              0 & 0 \\
                              N^*A & 0 \\
                            \end{array}
                          \right).\]

Since $ANf\neq N^*Af$ for any $f\neq 0$, we infer that
$TM^*\not\subset M^*T$ and $M^*T\not\subset TM^*$.
\end{exa}

\section{New versions of the Fuglede-(Putnam) theorem}

\begin{thm}\label{Fug-Put-MORTAD-PAMS-2003-2020} If $T$ is a closed operator with domain $D(T)\subset H$, and if $N$ is a normal operator, then
\[TN\subset NT\Longrightarrow TN^*\subset N^*T\]
whenever $D(N)\subset D(T)$.
\end{thm}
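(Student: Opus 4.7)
The plan is to reduce to the classical Fuglede--Putnam theorem (Theorem~\ref{Fug-Put UNBD A BD}) via the resolvent $R:=(N-i)^{-1}$, a bounded normal operator with $\ran(R)=D(N)\subset D(T)$. Consequently the composition $B:=TR$ is everywhere defined on $H$, and since $T$ is closed and $R$ is bounded, the closed graph theorem shows that $B$ is bounded.

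I would first use $TN\subset NT$ to establish $TR=RT$ on $D(T)$. For $z\in D(T)$ one has $Rz\in D(N)\subset D(T)$, so $NRz=z+iRz\in D(T)$ and hence $Rz\in D(TN)$; the hypothesis then gives $T(NRz)=NT(Rz)$, which rearranges to $(N-i)T(Rz)=Tz$, i.e.\ $T(Rz)=R(Tz)$. Applied with $z=Ry$ for $y\in H$, this yields $BR=RB$, and Theorem~\ref{Fug-Put UNBD A BD} (bounded $B$, bounded normal $R$) delivers $BR^*=R^*B$ with $R^*=(N^*+i)^{-1}$. A routine resolvent calculation then upgrades this to $BN^*\subset N^*B$, so in particular $B$ preserves $D(N^*)=D(N)$.

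The final step is to extract $TN^*\subset N^*T$. For $y\in D(TN^*)$, applying $TR=RT$ twice (with $z=y$ and $z=N^*y$), together with $RN^*=N^*R$ on $D(N^*)$ and $BN^*\subset N^*B$, yields
\[
R(TN^*y)=T(RN^*y)=T(N^*Ry)=B(N^*y)=N^*(By)=N^*(RTy)=(N^*R)(Ty),
\]
where $N^*R=N^*(N-i)^{-1}$ is bounded. The two key properties of $N^*R$ are the algebraic identity $(N-i)(N^*R)=N^*$ on $D(N^*)$ and the spectral characterization $(N^*R)x\in D(N)\Longleftrightarrow x\in D(N^*)$. Since the left-hand side of the displayed chain lies in $\ran(R)=D(N)$, so does the right-hand side, which forces $Ty\in D(N^*)$; applying $(N-i)$ to the chain then yields $TN^*y=N^*Ty$.

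The main obstacle will be the domain bookkeeping in this last step: the membership $Ty\in D(N^*)$ is not a formal consequence of the bounded commutation $BN^*\subset N^*B$, and must be read off from the spectral identity for $N^*R$.
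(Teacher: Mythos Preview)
Your reduction to the bounded Fuglede--Putnam theorem via the resolvent is attractive, but it has a genuine gap at the very first step: for a general unbounded normal operator $N$ the resolvent set may be empty, so $(N-i)^{-1}$ need not exist as a bounded operator. A concrete example is $N=M_z$, multiplication by $z$ on $L^2(\C,e^{-|z|^2}\,dA)$; this operator is closed, densely defined and normal, yet $\sigma(N)=\C$. Your argument therefore only covers the case $\rho(N)\neq\varnothing$. When that assumption holds (with $i$ replaced by any $\lambda\in\rho(N)$), the remaining steps --- boundedness of $B=TR$ via the closed graph theorem, the identity $TR=RT$ on $D(T)$, the upgrade $BR^*=R^*B\Rightarrow BN^*\subset N^*B$, and the spectral characterization $(N^*R)x\in D(N)\Leftrightarrow x\in D(N^*)$ --- are correct and the domain bookkeeping goes through.

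The paper avoids this obstruction altogether by a $2\times 2$ block--matrix trick: it sets
\[
A=\begin{pmatrix}0&0\\ T&0\end{pmatrix},\qquad
\widetilde N=\begin{pmatrix}N&0\\ 0&N^*\end{pmatrix},
\]
observes that $TN\subset NT$ translates into $A\widetilde N\subset \widetilde N^{\,*}A$, and then invokes Theorem~\ref{Fug-Put-MORTAD-PAMS-2003} (the version with $AN\subset N^*A$). That earlier theorem is what absorbs the unbounded--spectrum difficulty. If you want to rescue the resolvent approach in full generality, you would need to replace $(N-i)^{-1}$ by a construction that is always bounded, e.g.\ the bounded transform $N(I+N^*N)^{-1/2}$ or the pair $N(I+N^*N)^{-1}$, $(I+N^*N)^{-1}$, and redo the commutation and domain arguments accordingly.
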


\begin{proof}Let $T$ be a closed operator and let $N$ be a normal
operator with $TN\subset NT$ and $D(N)\subset D(T)$. Set
\[A=\left(
                            \begin{array}{cc}
                              0 & 0 \\
                              T & 0 \\
                            \end{array}
                          \right)\text{ and }\widetilde{N}=\left(
      \begin{array}{cc}
        N & 0 \\
        0 & N^* \\
      \end{array}
    \right)\]
    and so $D(A)=D(T)\oplus H$ and $D(\widetilde{N})=D(N)\oplus D(N^*)$.

Clearly
\[A\widetilde{N}=\left(
                            \begin{array}{cc}
                              0 & 0_{D(N^*)} \\
                              TN & 0 \\
                            \end{array}
                          \right)\subset \left(
                            \begin{array}{cc}
                              0 & 0 \\
                              NT & 0 \\
                            \end{array}
                          \right)=\widetilde{N}^*A.\]

Since $\widetilde{N}$ is normal and $D(\widetilde{N})\subset D(A)$,
Theorem \ref{Fug-Put-MORTAD-PAMS-2003} applies and implies that
$A\widetilde{N}^*\subset \widetilde{N}A$, that is
\[\left(
                            \begin{array}{cc}
                              0 & 0_{D(N)} \\
                              TN^* & 0 \\
                            \end{array}
                          \right)\subset \left(
                            \begin{array}{cc}
                              0 & 0 \\
                              N^*T & 0 \\
                            \end{array}
                          \right).\]

In particular, for any $(f,0)\in D(A{\widetilde{N}}^*)$:
$A\widetilde{N}^*f=\widetilde{N}Af$ (with some abuse of notation).
Still in particular, we obtain
\[TN^*f=N^*Tf\text{ for any }f\in D(TN^*)\subset
D(N^*T).\] In other words, $TN^*\subset N^*T$, as needed.
\end{proof}

\begin{rema}The condition $D(N)\subset D(T)$ may not just be dropped
as Example \ref{hghgdfsdzretrutioyoyoypypypy} shows. Moreover, the
fact that we have taken $T$ closed and $D(N)\subset D(T)$ is a
natural condition for it is tacitly assumed when $T\in B(H)$.
\end{rema}

It is almost surprising that the (most) generalized Fuglede-Putnam
version only needs $D(N)\subset D(T)$ (i.e. without requiring
$D(M)\subset D(T)$ or other conditions).

\begin{thm}\label{Main Theorem A closed  N M unbounded-2020}
Let $T$ be a closed operator with domain $D(T)$. Let $M$ and $N$ be
two unbounded normal operators with domains $D(N)$ and $D(M)$
respectively. If $D(N)\subset D(T)$, then
\[TN\subset MT\Longrightarrow TN^*\subset M^*T.\]
\end{thm}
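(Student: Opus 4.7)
The plan is to reduce this two-normal-operator statement to the (already established) single-normal Theorem \ref{Fug-Put-MORTAD-PAMS-2003}, by packaging $N$ and $M$ into one block-diagonal normal operator, exactly in the spirit of the proof of Theorem \ref{Fug-Put-MORTAD-PAMS-2003-2020}. The pleasant feature is that this reduction will automatically avoid any hypothesis on $D(M)$, because $M$ will be placed in a diagonal slot whose matching coordinate in the domain of the auxiliary $A$ is all of $H$.

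Concretely, I would introduce
\[
A := \begin{pmatrix} 0 & 0 \\ T & 0 \end{pmatrix}, \qquad \widetilde{N} := \begin{pmatrix} N & 0 \\ 0 & M^{*} \end{pmatrix},
\]
with domains $D(A)=D(T)\oplus H$ and $D(\widetilde{N})=D(N)\oplus D(M^{*})=D(N)\oplus D(M)$. Then $A$ is closed (because $T$ is), $\widetilde{N}$ is normal (as a block-diagonal of two normals, $M^*$ being normal whenever $M$ is), and the sole hypothesis $D(N)\subset D(T)$ translates into $D(\widetilde{N})\subset D(A)$ with nothing to check on the second coordinate.

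The next step is to show that $TN\subset MT$ lifts to $A\widetilde{N}\subset \widetilde{N}^{*}A$. A short block computation gives
\[
A\widetilde{N}=\begin{pmatrix} 0 & 0_{D(M^{*})} \\ TN & 0 \end{pmatrix}, \qquad \widetilde{N}^{*}A=\begin{pmatrix} 0 & 0 \\ MT & 0 \end{pmatrix},
\]
and the inclusion $TN\subset MT$, together with $D(M^{*})\subset H$, yields the desired domain containment and equality on vectors. Theorem \ref{Fug-Put-MORTAD-PAMS-2003} then applies to the pair $(A,\widetilde{N})$ and produces $A\widetilde{N}^{*}\subset \widetilde{N}A$.

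The final step is to read off the bottom-left entry of this matrix inclusion. One has
\[
A\widetilde{N}^{*}=\begin{pmatrix} 0 & 0_{D(N)} \\ TN^{*} & 0 \end{pmatrix}, \qquad \widetilde{N}A=\begin{pmatrix} 0 & 0 \\ M^{*}T & 0 \end{pmatrix},
\]
so testing on vectors of the form $(f,0)\in D(A\widetilde{N}^{*})$ returns exactly $TN^{*}f=M^{*}Tf$ for all $f\in D(TN^{*})$ together with $D(TN^{*})\subset D(M^{*}T)$, i.e. $TN^{*}\subset M^{*}T$. I do not expect any serious obstacle; the only step requiring care is the routine but fiddly bookkeeping that turns the block inclusion into the scalar inclusion, checking that the second-coordinate domains ($D(M^{*})$, $D(N)$, respectively) do not impose spurious constraints on the first coordinate. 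No new idea beyond the block trick already used in Theorem \ref{Fug-Put-MORTAD-PAMS-2003-2020} seems to be needed.
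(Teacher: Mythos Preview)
Your proof is correct and is essentially the paper's own argument: both use the same $2\times 2$ block trick with $A=\left(\begin{smallmatrix}0&0\\T&0\end{smallmatrix}\right)$, the only cosmetic difference being that the paper places $M$ (rather than $M^{*}$) in the second diagonal slot of $\widetilde{N}$ and then invokes Theorem~\ref{Fug-Put-MORTAD-PAMS-2003-2020} instead of Theorem~\ref{Fug-Put-MORTAD-PAMS-2003} directly. One small slip: in your display for $A\widetilde{N}^{*}$ the restricted zero in the top-right entry should be $0_{D(M)}$, not $0_{D(N)}$, though this does not affect the conclusion since you only test on vectors $(f,0)$.
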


\begin{proof}The proof is similar to that of Theorem
\ref{Fug-Put-MORTAD-PAMS-2003-2020}, so some details will be
omitted. Assume that $M$ and $N$ are two normal operators such that
 $D(N)\subset D(T)$ and $TN\subset MT$. Put
\[A=\left(
                            \begin{array}{cc}
                              0 & 0 \\
                              T & 0 \\
                            \end{array}
                          \right)\text{ and }\widetilde{N}=\left(
      \begin{array}{cc}
        N & 0 \\
        0 & M \\
      \end{array}
    \right).\]
Then
\[A\widetilde{N}=\left(
                            \begin{array}{cc}
                              0 & 0_{D(M)} \\
                              TN & 0 \\
                            \end{array}
                          \right)\text{ and }\widetilde{N}A=\left(
                            \begin{array}{cc}
                              0 & 0 \\
                              MT & 0 \\
                            \end{array}
                          \right).\]
Hence $A\widetilde{N}\subset \widetilde{N}A$. But
$D(\widetilde{N})=D(N)\oplus D(M)\subset D(T)\oplus H=D(A)$.
Therefore, the assumptions of Theorem
\ref{Fug-Put-MORTAD-PAMS-2003-2020} are fulfilled and so
$A\widetilde{N}^*\subset \widetilde{N}^*A$. Proceeding as in the end
of the proof of Theorem \ref{Fug-Put-MORTAD-PAMS-2003-2020} allows
us to finally obtain $TN^*\subset M^*T$, as wished.
\end{proof}

\end{document}